\newtheorem{corollary}[subsection]{Corollary}
\newtheorem{lemma}[subsection]{Lemma}
\newtheorem{proposition}[subsection]{Proposition}
\theoremstyle{remark}
\newtheorem*{remark}{Remark}
\newcommand{\mailto}[1]{\href{mailto: #1}{#1}}
\newcommand{\cP}{\mathcal{P}}
\DeclareMathOperator{\sat}{sat}
\title{
On the number of edges in saturated partial embeddings of maximal planar graphs
}
\author{János Barát\thanks{HUN-REN Alfr\'ed R\'enyi Institute of Mathematics, Budapest, Hungary and University of Pannonia, Veszprém, Hungary, \mailto{barat@renyi.hu}
}
\and
Zoltán L. Blázsik\thanks{Bolyai Institute, University of Szeged, Aradi v\'ertan\'uk tere 1, 6720
Szeged, Hungary. \\ University of Johannesburg Auckland Park, 2006 South Africa, \mailto{blazsik@server.math.u-szeged.hu}}
\and
Balázs Keszegh\thanks{HUN-REN Alfr\'ed R\'enyi Institute of Mathematics and ELTE Eötvös Loránd University, Budapest, Hungary. 
~\mailto{keszegh@renyi.hu}
}
\and
Zeyu Zheng\thanks{Department of Mathematical Sciences, Carnegie Mellon University, Pittsburgh, PA 15213, USA. \mailto{zeyuzhen@andrew.cmu.edu}
}
}
\date{}
\begin{document}
\maketitle
\begin{abstract}
We investigate the extremal properties of saturated partial plane embeddings of maximal planar graphs. For a planar graph $G$, the plane-saturation number $\sat_{\mathcal{P}}(G)$ denotes the minimum number of edges in a plane subgraph of $G$ such that the addition of any edge either violates planarity or results in a graph that is not a subgraph of $G$. We focus on maximal planar graphs and establish an upper bound on $\sat_{\mathcal{P}}(G)$ by showing there exists a universal constant $\epsilon > 0$ such that $\sat_{\mathcal{P}}(G) < (3-\epsilon)v(G)$ for any maximal planar graph $G$ with $v(G) \geq 16$. This answers a question posed by Clifton and Simon. 
Additionally, we derive lower bound results and demonstrate that for maximal planar graphs with sufficiently large number of vertices, the minimum ratio \(\sat_{\mathcal{P}}(G)/e(G)\) lies within the interval \((1/16, 1/9 + o(1)]\).
\end{abstract}

\section{Introduction}

Unless explicitly stated otherwise, all graphs considered in this paper are finite, undirected and simple. Recall from fundamental graph theory that a \emph{planar graph} is a graph that admits a plane embedding, while a \emph{plane graph} refers to a specific embedding of a planar graph in the plane. For simplicity, we often treat a plane graph as both a graph and its corresponding embedding, depending on the context. We denote the vertex set and the edge set of a graph $G$ by $V(G)$ and $E(G)$, respectively, and the number of vertices and edges by $v(G)$ and $e(G)$.

The degree of a vertex $v$ is denoted by $d(v)$, and the number of degree $i$ neighbors of $v$ is denoted by $d_i(v)$. Vertices of degree $3$ play an important role in our subsequent discussion. Their count in $G$ is denoted by $n_3(G)$.

A classic problem in extremal graph theory is the graph saturation problem. Given a graph $H$, one seeks to determine the minimum number of edges in a graph $G$ such that $G$ does not contain $H$ as a subgraph, but the addition of any edge to $G$ (on the same vertex set) results in a copy of $H$ as a subgraph. For a more in-depth overview of the classic saturation problem, we refer the readers to the survey \cite{faudree2011survey}.

The study of the plane saturation problem, initiated by Clifton and Salia \cite{clifton2024saturated}, is a variant of the saturation problems. A plane graph $H$ is a \emph{plane-saturated subgraph} of $G$, if adding an extra edge to $H$ either violates the planarity of the embedding, or results in a graph that is not a subgraph of $G$. We define the \emph{plane-saturation number}, $\sat_\cP(G)$, as the minimum value of $e(H)$, where $H$ is a plane-saturated subgraph of $G$.

Very recently, parallel to our studies, Clifton and Simon investigated the plane saturation problem for labeled and unlabeled maximal planar graphs in \cite{clifton2024saturated2}. They demonstrated through construction that there exists an infinite family of maximal planar graphs satisfying $\sat_\cP(G)\ge 3v(G)/2-3$. They also asked whether there exists some $\epsilon$ such that $\sat_\cP(G)<(3-\epsilon)v(G)$ for any maximal planar graph $G$. The only previously known result was independently noticed by us and the authors of \cite{clifton2024saturated2}.

\begin{proposition}\label{prop:linearGap}
Let $G$ be a maximal planar graph on $n$ vertices, with degree sequence $d_1 \leq d_2 \leq \cdots \leq d_n$. Suppose there exist an index $k \in [n-1]$ and a constant $c > 0$ such that $d_{k+1} - d_k \geq cn$. Then, 
\[
\sat_\cP(G) \leq (3-c)n - 2.
\]
\end{proposition}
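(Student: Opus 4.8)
\noindent\emph{Proof idea.} The plan is to exhibit an explicit plane-saturated subgraph $H$ of $G$ obtained by ``demoting'' a single vertex of enormous degree. We may assume $n\ge 4$, since otherwise $G$ has no degree gap. Fix a vertex $v$ with $d(v)\ge d_{k+1}$, so $d(v)\ge d_k+cn\ge 3+cn$ (a maximal planar graph on at least four vertices has minimum degree at least $3$). Listing the neighbours of $v$ in the cyclic order of the rotation at $v$ as $u_1,\dots,u_d$ ($d=d(v)$), we have $u_iu_{i+1}\in E(G)$ and $vu_iu_{i+1}$ a face of $G$ for each $i$ (indices modulo $d$), because $G$ is a triangulation. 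Let $z$ be the unique third vertex of the face of $G$ incident to the edge $u_1u_2$ other than $vu_1u_2$; then $z\notin\{v,u_1,u_2\}$ and $u_1u_2z$ is a triangular face of $G$.

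I would then define $H$ by deleting from $G$ every edge incident to $v$, and re-drawing $v$ inside the face $u_1u_2z$ — which survives the deletion, being a face of $G$ not incident to $v$ — together with the edges $vu_1$, $vu_2$, and also $vz$ exactly when $vz\in E(G)$. This is a plane graph, and a subgraph of $G$ on the same vertex set. The edge count is immediate: $H$ consists of the $e(G)-d(v)$ edges of $G$ not at $v$ together with $t\in\{2,3\}$ edges at $v$, so
\[
e(H)=e(G)-d(v)+t\le(3n-6)-(3+cn)+3=(3-c)n-6<(3-c)n-2,
\]
which already beats the claimed bound. The conceptual point is the choice of where to reinsert $v$: putting it into the ``small'' face $u_1u_2z$ sitting on a single edge of its link — rather than into the ``big'' face bounded by the whole link cycle of $v$ — is exactly what prevents $v$ from being able to reach any of its former neighbours.

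The one step that needs genuine care is checking that $H$ is plane-saturated. Since $H$ contains all of $G$ except the edges at $v$, every non-edge of $H$ is either a non-edge of $G$ (so adding it destroys being a subgraph of $G$), or an edge $vu$ with $u\in N_G(v)$ and $u\notin\{u_1,u_2,z\}$. For such an edge $vu$, I would note that the faces of $H$ incident to $v$ are $vu_1u_2$ together with either the quadrilateral $vu_1zu_2$ (if $d_H(v)=2$) or the two triangles $vu_2z$, $vzu_1$ (if $d_H(v)=3$); in every case the vertices on a face through $v$ are precisely $u_1$, $u_2$, $z$, so $v$ and $u$ lie on no common face and $vu$ cannot be added to the embedding without a crossing. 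Hence $H$ is plane-saturated, and $\sat_\cP(G)\le e(H)\le(3-c)n-2$. I do not anticipate a real obstacle; one only has to verify that $u_1u_2z$ really remains a face after the deletion and to notice the small dichotomy according to whether $z$ is adjacent to $v$ — the case $vz\in E(G)$, which occurs for instance when $v$ is adjacent to every other vertex, is precisely the one in which $v$ must be reinserted with degree $3$ rather than $2$ in order to stay trapped inside the triangle $u_1u_2z$.
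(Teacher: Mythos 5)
Your construction and edge count are fine, but the saturation check hinges on the claim that if $ab\notin E(G)$ then $H+ab$ is ``not a subgraph of $G$''. That step is only valid in the \emph{labeled} version of the problem, where the identification of $V(H)$ with $V(G)$ is fixed. In this paper (following Clifton--Salia) ``subgraph of $G$'' is meant up to an arbitrary injection of the vertex set: see the proof of the lower bound $\sat_\cP(G)\ge\frac{v(G)+4}{6}$, where the embedding $f$ is explicitly replaced by a different embedding $\tilde f$ after an edge is added. Under that definition your $H$ need not be plane-saturated, and it is telling that your argument never uses the gap hypothesis beyond $d(v)\ge cn+3$ --- the gap is exactly the tool a correct proof must use to control these re-embeddings.

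Concretely, deleting all edges at $v$ creates a face $F^\ast$ of $H$ bounded by the link cycle $u_1u_2\cdots u_d$ of $v$. Since $d\ge 4$ and $G$ is planar, some pair $u_i,u_j$ on this face is non-adjacent in $G$, so $u_iu_j$ can be drawn inside $F^\ast$ without a crossing, and you would have to show that $H+u_iu_j$ is not isomorphic to any subgraph of $G$. This can fail. Let $G_0$ be the double wheel with apexes $v_1,v_2$ and cycle $u_1\cdots u_m$, and let $G$ be obtained by inserting a degree-$3$ vertex $w_i$ (resp.\ $w_i'$) into each face $v_1u_iu_{i+1}$ (resp.\ $v_2u_iu_{i+1}$); then $n=3m+2$, the degree sequence is $(3,\dots,3,8,\dots,8,2m,2m)$, and the hypothesis holds with $c$ close to $2/3$. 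Running your construction at $v=v_1$ with the consecutive pair $u_1,w_1$ (so $z=u_2$ and $vz\in E(G)$), both $w_1$ and $w_2$ lie on $F^\ast$ and $w_1w_2\notin E(G)$; yet the permutation that swaps $v_1$ and $w_1$ and fixes everything else maps every edge of $H+w_1w_2$ to an edge of $G$ (the only edges moved are $v_1u_1,v_1w_1,v_1u_2,w_1u_1,w_1u_2,w_1w_2$, which go to $w_1u_1,w_1v_1,w_1u_2,v_1u_1,v_1u_2,v_1w_2$, all edges of $G$). So $H+w_1w_2$ is still a plane subgraph of $G$, your $H$ is not plane-saturated, and a saturated extension of it --- which may triangulate much of $F^\ast$ --- is not controlled by your edge count. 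Repairing the argument requires showing that \emph{every} planar edge addition forces some vertex count or degree count to exceed what $G$ can accommodate, which is where the hypothesis $d_{k+1}-d_k\ge cn$ genuinely enters.
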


In this paper, we address this question and provide an affirmative answer to the general case.

\begin{restatable}{theorem}{thmMain}\label{thm:main}
There exists a universal constant $\epsilon>0$ such that $\sat_\cP(G)<(3-\epsilon)v(G)$ holds for any maximal planar graph $G$ on at least $16$ vertices.
\end{restatable}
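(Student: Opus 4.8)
The plan is to dichotomize on the degree sequence $d_1\le\cdots\le d_n$ of $G$ (write $n=v(G)$). If $G$ has a \emph{linear degree gap}, i.e.\ $d_{k+1}-d_k\ge\epsilon_0 n$ for some $k$ and a suitable absolute constant $\epsilon_0>0$, then Proposition~\ref{prop:linearGap} already gives $\sat_\cP(G)\le (3-\epsilon_0)n-2$ and we are done with $\epsilon=\epsilon_0$. Hence we may assume the degree sequence has no linear gap. Since $\sum_v d(v)=6n-12$, a standard double count shows that more than $n/4$ vertices have degree at most $6$; the whole difficulty of the no-gap case will be to convert a positive fraction of these low-degree vertices into genuine edge savings.

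The basic move is a local surgery around a vertex $w$ whose link $C_w$ (the cycle on $N(w)$ induced by the embedding) is chordless in $G$ and which has a neighbour $x\in C_w$ with $d(x)\ge 5$: delete $w$ together with all its incident edges except $wx$, and re-embed — since $w$ now has degree one it may be placed into any face incident to $x$ — so that $w$ sits inside a triangular face of $G$ at $x$ containing none of the other neighbours of $w$. Chordlessness of $C_w$ and $d(x)\ge 5$ guarantee that such a face exists, and one checks directly that in the new plane graph every face is $G$-chordless: the old triangular faces, the ``hole'' $C_w$, and the modified face at $x$. Hence the result is a plane-saturated subgraph of $G$, and this surgery removes $d(w)-1\ge 2$ edges while adding none. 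Performing it simultaneously on a set $S$ of such vertices that is independent and spread out enough that the parking faces can be chosen pairwise disjoint (and disjoint from the holes) gives $\sat_\cP(G)\le e(G)-\sum_{w\in S}(d(w)-1)\le (3n-6)-2|S|$, so it suffices to produce such an $S$ with $|S|=\Omega(n)$.

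To produce $S$ we must supply the chordless-link and anchor conditions and then resolve conflicts. For the chordless-link condition we use separating triangles: if $G$ is $4$-connected then every triangle is a face, so \emph{every} link is chordless and all $\Omega(n)$ low-degree vertices qualify; otherwise we pass to an innermost separating triangle, whose interior is a $4$-connected maximal planar graph (or a graph on at most $6$ vertices, handled by hand) all of whose internal vertices have chordless links in $G$, and we iterate this over a laminar family of separating triangles. The anchor condition ($d(x)\ge 5$ for some $x\in C_w$) can fail only if all neighbours of $w$ have degree $\le 4$, and running through the possibilities $d(w)\in\{3,4,5,6\}$ this forces $G$ to be one of a few explicit graphs (stacked wheels and bipyramids over cycles of length at most $6$) on fewer than $16$ vertices, hence impossible under our hypothesis $v(G)\ge 16$.

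The genuinely delicate step, and what I expect to be the main obstacle, is the conflict resolution that yields a \emph{linear}-sized $S$. A low-degree vertex may be adjacent to several high-degree vertices, and if many low-degree candidates must park near a few high-degree hubs, the naive independent-set argument on the square graph $G^2$ only produces $\Omega(\sqrt n)$ surgeries. One has to argue more carefully — distributing each candidate among its several admissible anchors and accounting for the total ``parking capacity'' $\sum_x d(x)$ of the hubs, which is $O(n)$ precisely because there is no linear degree gap — or, when a single hub already has linearly large degree together with a chordless link, applying the basic surgery to the hub itself to delete $\Omega(n)$ edges at one stroke. Tracking the constants through this analysis is what pins down the concrete (small) value of $\epsilon$.
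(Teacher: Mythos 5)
Your first branch (a linear gap in the degree sequence, handled by \cref{prop:linearGap}) is fine, but the second branch is not a proof: it is an announced strategy whose central step is missing. You yourself flag the construction of a linear-sized set $S$ of surgery sites with pairwise compatible parking faces as ``the main obstacle'' and offer only directions one might pursue (charging candidates to anchors, exploiting the absence of a degree gap, or operating on a single hub). Until that step is carried out --- together with the laminar iteration over separating triangles that is supposed to supply chordless links, and the case analysis claimed to eliminate vertices all of whose neighbours have degree at most $4$, which as stated is a local condition on one vertex $w$ and does not obviously force the whole graph to be small --- there is no theorem. The paper avoids all of this by dichotomizing on $n_3(G)$ rather than on degree gaps, and both of its regimes (\cref{lemma1} and \cref{lem:many3}) reduce to short counting arguments.

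There is also a correctness issue in the surgery itself, independent of the quantitative gap. A plane-saturated subgraph must have the property that $H+e$ is not a subgraph of $G$ under \emph{any} vertex mapping, not merely under the identity embedding you started from; checking that every face of the surgered drawing is ``$G$-chordless'' only rules out re-adding edges of $G$ in the original labelling. After your surgery the vertex $w$ has degree $1$ and its parking face has four boundary vertices, so $H$ plus a non-edge of $G$ incident to $w$ may well be isomorphic to a subgraph of $G$ by sending $w$ to a different low-degree vertex; in particular, for a degree-$3$ candidate $w$ a re-embedding can restore degree $3$ to $w$ inside its parking triangle, wiping out the claimed saving of $d(w)-1=2$ edges. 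The paper sidesteps this by never certifying saturation of an intermediate graph: it greedily saturates and then bounds the edge count of \emph{any} plane extension via a degree-sequence comparison (in \cref{lemma1}, more vertices are capped at degree $3$ than $G$ possesses; in \cref{lem:many3}, all leftover degree-$3$ vertices are crammed into one face so that all but one must lose an edge). Your argument would need an analogous embedding-independent accounting to be salvageable.
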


For lower bound results, Clifton and Salia \cite{clifton2024saturated} showed that any planar graph $G$ without degree~$1$ or degree~$2$ twins satisfies $\sat_\cP(G)>e(G)/16$, where twins refer to a pair of vertices that share the same neighborhood. Since maximal planar graphs do not contain degree~$1$ or degree~$2$ twins, the following corollary immediately holds.
\begin{corollary}\label{coro:lowerBound}
    For any maximal planar graph $G$, $\sat_\cP(G)>(3v(G)-6)/16$.
\end{corollary}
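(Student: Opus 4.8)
The plan is to deduce this directly from the Clifton--Salia inequality $\sat_\cP(G) > e(G)/16$, which the excerpt permits us to invoke for any planar graph containing no degree-$1$ or degree-$2$ twins, combined with two standard facts about maximal planar graphs: that they contain no such twins, and that $e(G) = 3v(G) - 6$.

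First I would check that the hypothesis of the Clifton--Salia result is satisfied. I would argue that every maximal planar graph $G$ with $v(G) \ge 4$ has minimum degree at least $3$: fixing a plane embedding, maximality forces every face to be bounded by a triangle, so a vertex $v$ of degree $1$ would lie on a face whose boundary walk is not a triangle, and a vertex $v$ of degree $2$ with neighbours $a,b$ would have both of its incident faces equal to the triangle $vab$, which for a simple graph on at least $4$ vertices is impossible. Hence $G$ has no vertex of degree $1$ or degree $2$ at all, and in particular no pair of such vertices sharing a common neighbourhood, i.e.\ no degree-$1$ or degree-$2$ twins. The degenerate cases $v(G) \le 3$ (where the only maximal planar graphs are $K_1$, $K_2$, $K_3$) I would dispose of by hand, checking that the claimed bound still holds, so that the blanket phrasing ``any maximal planar graph'' is justified; note that for $v(G) \le 2$ the right-hand side $(3v(G)-6)/16$ is negative and there is nothing to prove.

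Then, applying $\sat_\cP(G) > e(G)/16$ and substituting $e(G) = 3v(G) - 6$ (Euler's formula for triangulations) immediately yields $\sat_\cP(G) > (3v(G) - 6)/16$, as required.

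There is essentially no genuine obstacle here: the statement is a corollary in the literal sense, the entire content being the observation that ``minimum degree $\ge 3$'' rules out degree-$1$ and degree-$2$ twins. The only point demanding a moment's care is the verification of the minimum-degree claim from maximality of the triangulation, together with the routine check of the small cases.
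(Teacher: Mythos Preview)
Your proposal is correct and follows exactly the route the paper takes: the corollary is stated immediately after the sentence ``Since maximal planar graphs do not contain degree~$1$ or degree~$2$ twins, the following corollary immediately holds,'' and your argument simply unpacks that sentence (minimum degree $\ge 3$, hence no low-degree twins, then apply Clifton--Salia and Euler). Your treatment is in fact more careful than the paper's, which does not spell out the minimum-degree justification or the small cases.
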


Clifton and Salia also conjectured in \cite{clifton2024saturated} that for planar graphs with minimal degree $3$, the ratio $\sat_\cP(G)/e(G)$ is bounded below by $1/9$. We present a construction of an infinite family of maximal planar graphs that achieves this ratio asymptotically, as detailed in the following theorem.

\begin{restatable}{theorem}{thmConstruction}\label{thm:construction}
    There exists an infinite family of maximal planar graphs such that each graph $G$ in the family satisfies $\sat_\cP(G)\le \frac{v(G)+82}{3}$.
\end{restatable}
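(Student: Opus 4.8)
The proof is by an explicit construction, and the target ratio already dictates its shape: since $e(G)=3v(G)-6$, a bound of the form $\sat_\cP(G)\le \tfrac{v(G)+O(1)}{3}$ means a plane-saturated subgraph $H$ of density $\tfrac19$ relative to $e(G)$, so $H$ must be very sparse — morally one triangle for every nine vertices of $G$. The plan is therefore to make almost all of $V(G)$ consist of ``cheap'' vertices organised into cells of nine, where each cell spends only three edges of $H$ (a triangle) and leaves its other six vertices isolated in $H$; a bounded ``frame'' is then all that is left over, and it contributes the additive constant (the ``$82$'').

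Concretely I would build the host graph $G$ as follows. Start from a fixed maximal planar frame and, for a parameter $L$, glue in $L$ copies of a single gadget. A gadget is inserted into a triangular face $xyz$ of the current triangulation: place a new triangle $\tau=\{a,b,c\}$ inside the face and join $a,b,c$ to $x,y,z$ so as to re-triangulate the region, and also earmark the six remaining vertices of the cell (taken among vertices already present, or added alongside). The construction is arranged so that in the final triangulation $G$: (a) the vertices of $\tau$ have no neighbours outside $\tau$ other than $x,y,z$, whence the $L$ triangles are pairwise non-adjacent in $G$; and (b) the non-triangle vertices split into cell-sets of size at least six, the set assigned to $\tau$ being independent in $G$ and disjoint from $N_G(\tau)$. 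To keep the family infinite one glues gadgets recursively — each insertion creates fresh triangular faces available to later gadgets — so that $v(G)=9L+O(1)$ for all large $L$.

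Now take $H$ to be the plane subgraph of $G$ whose edge set is the union of the $L$ gadget-triangles together with a bounded number of frame edges, all other $\approx 6L$ vertices being isolated; then $e(H)=3L+O(1)\le \tfrac{v(G)+82}{3}$. I would embed $H$ by drawing the $L$ triangles pairwise disjoint around a common region, with the six cell-mates of $\tau_i$ drawn inside $\tau_i$ and the frame drawn in the common region. The faces of this embedding are then $L$ ``private rooms'' — the interior of $\tau_i$ is incident to exactly $V(\tau_i)$ together with its six cell-mates — and one ``common face'' incident to exactly the triangle-vertices and the frame. An edge $uw\in E(G)\setminus E(H)$ can be added to this embedding only if $u$ and $w$ lie on a common face; but inside the room of $\tau_i$ the only edges of $G$ among the nine vertices present are the three edges of $\tau_i$ (the cell-mates being independent in $G$ and non-adjacent to $\tau_i$), and on the common face the only edges of $G$ among the vertices present are the $L$ triangles and the frame edges — and every one of these already belongs to $H$. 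Hence no edge can be added, $H$ is plane-saturated, and the stated bound follows.

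The crux is the existence assertion in the second step: for infinitely many $L$ one must actually exhibit a triangulation carrying $L$ pairwise non-adjacent triangles whose complement splits into independent six-sets each avoiding the neighbourhood of its matched triangle, all while keeping $G$ a triangulation. Reconciling this many prescribed non-adjacencies with maximal planarity is where the real work is; the natural route is a recursive gluing in which the two properties ``the triangles are mutually far apart'' and ``the $i$-th cell-set avoids $N_G(\tau_i)$'' are maintained as invariants of the gadget-insertion step, with the bounded frame (and the constant $82$) absorbing the finitely many boundary corrections. Once $G$, $H$ and the embedding of $H$ are pinned down, the remainder — verifying face by face that the embedding of $H$ exposes no edge of $G$ outside $E(H)$ — is a routine inspection.
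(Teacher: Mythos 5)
Your proposal has a genuine gap, and it is conceptual rather than technical. The notion of plane-saturation used here is \emph{unlabeled}: after adding an edge $e$ to the plane graph $H$, the resulting abstract graph $H+e$ must fail to be isomorphic to \emph{any} subgraph of $G$, not merely fail to extend the one correspondence $V(H)\to V(G)$ you had in mind. Your verification ("an edge $uw\in E(G)\setminus E(H)$ can be added only if \dots") checks only the labeled condition. This matters because your $H$ --- $L$ interchangeable triangles, a bounded frame, and $\approx 6L$ isolated vertices --- is extremely non-rigid. For instance, take two isolated cell-mates $p,q$ in the room of $\tau_i$ and add the edge $pq$ (planarity is fine). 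The new graph is just $L$ disjoint triangles, the frame, one extra disjoint edge, and isolated vertices; to rule it out you would need that $G$ contains \emph{no} collection of $L$ vertex-disjoint triangles plus a disjoint frame copy plus one more disjoint edge. But $G$ is a triangulation: replacing one gadget triangle $\{a,b,c\}$ by the triangle $\{a,x,y\}$ (where $xyz$ is the host face of that gadget) frees up the edge $bc$, so such a collection exists and $H+pq$ embeds in $G$. Hence your $H$ is not plane-saturated. A secondary problem: each gadget's attachment vertices $x,y,z$ form a triangle of $G$ adjacent to all of $\tau$, so they can be neither cell-mates of any room (cell-mates must be independent and avoid $N_G(\tau)$) nor placed on the common face (that exposes the addable edge $ax$); for large $L$ they cannot all be absorbed by the bounded frame, so the invariants you want to maintain are not simultaneously satisfiable.

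The paper's construction is designed precisely to manufacture the rigidity you are missing. The host $G_k$ is a double wheel over a cycle $C_k$ with apexes $v_1,v_2$, a degree-$3$ vertex stacked in every face, and a small $K_4$-gadget attached; the saturated subgraph $H_k$ consists of a $K_4$, the cycle $C_k$, two stars $K_{1,11}$ plus one edge, and isolated vertices, for $e(H_k)=k+29=\frac{v(G_k)+82}{3}$. The point is that \emph{every} embedding of $H_k$ into $G_k$ is essentially forced: the two vertices of degree $\ge 11$ must go to $v_1,v_2$, the $k$-cycle must go to the original $C_k$, the $K_4$ to the gadget, so the isolated vertices are forced onto the stacked degree-$3$ vertices, which form an independent set. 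Only then does "adding any edge either crosses or leaves a non-subgraph of $G_k$" go through. If you want to salvage your cell-based approach, you would have to replace the interchangeable triangles and anonymous isolated vertices by structures that are identifiable inside $G$ up to symmetry (high-degree hubs, a unique long cycle, or similar), which is exactly the role the $K_{1,11}$'s and the $C_k$ play in the paper.
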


\section{Propositions of maximal planar graphs}

In this section, we recall and establish some basic results about maximal planar graphs that are useful in our proofs.

Note that in a maximal planar graph with at least $4$ vertices, the minimum degree is at least $3$. The following propositions show that degree $3$ vertices can not be too dense.
\begin{proposition}\label{prop:main}
    In a maximal planar graph $G$ with at least $5$ vertices, the following properties hold: \begin{enumerate}[(i).]
        \item The set of degree $3$ vertices forms an independent set.\label{item:independent}
        \item Every vertex $v$ of degree at least $4$ has at most $\lfloor d(v)/2\rfloor$ many degree $3$ neighbors.\label{item:neighbors}
        \item Every vertex $v$ of degree at least $4$ has at least three neighbors of degree at least $4$, unless $G$ is $K_5^-$ (the complete graph on 5 vertices with one edge deleted).
        \label{item:highNeighbors}
        \item Every vertex $v$ of degree at least $4$ has two neighbors that are adjacent in $G$, both of which have degree at least $4$, as illustrated in Figure~\ref{fig:adj}.
        \label{item:adjacentNeighbors}
    \end{enumerate}
    \begin{figure}[H]
    \centering
    \includegraphics[width=0.3\linewidth]{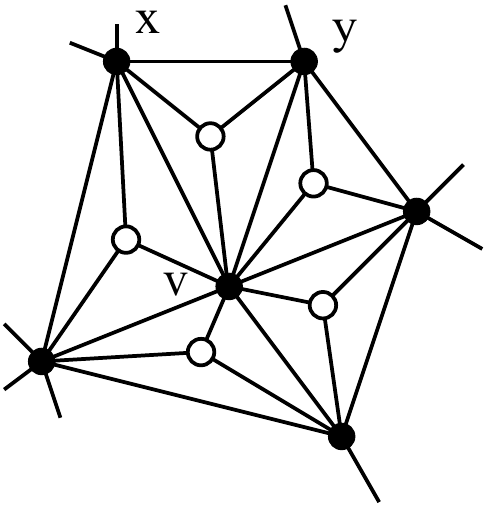}
    \caption{Vertices $x$ and $y$ are adjacent neighbors of $v$.}
    \label{fig:adj}
    \end{figure}
\end{proposition}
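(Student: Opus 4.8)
The plan is to reduce everything to one structural fact: a maximal planar graph $G$ on $n\ge 4$ vertices is a triangulation of the sphere, so every face (the outer one included) is a triangle, and hence the neighbors of any vertex $v$, read off in the rotation around $v$, form a cycle $C_v$ — the \emph{link} of $v$ — in which consecutive neighbors are adjacent in $G$. The one immediate corollary I will lean on is that the three neighbors of a degree-$3$ vertex are pairwise adjacent; equivalently, the closed neighborhood of a degree-$3$ vertex induces a $K_4$. Parts (ii), (iii), and (iv) will also invoke part (i).

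For (i), suppose $u\sim v$ are both of degree $3$. Then $N(u)=\{v,a,b\}$ induces a triangle, so $v$ is adjacent to $a$ and to $b$; since $d(v)=3$ this forces $N(v)=\{u,a,b\}$. Reading off the triangular faces around $u$ and around $v$ now shows that the four triangles on $\{u,v,a,b\}$ tile the whole sphere, so $G=K_4$, contradicting $n\ge 5$. For (ii), the degree-$3$ neighbors of $v$ all lie on the cycle $C_v$, consecutive vertices of which are adjacent in $G$; by (i) they therefore form an independent set of $C_v$, and an independent set in a cycle on $d(v)$ vertices has size at most $\lfloor d(v)/2\rfloor$.

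For (iii), part (ii) already gives $v$ at least $d(v)-\lfloor d(v)/2\rfloor=\lceil d(v)/2\rceil$ neighbors of degree at least $4$, which is at least $3$ as soon as $d(v)\ge 5$; so only $d(v)=4$ is in question. Here (ii) still yields at least two such neighbors, and I claim that exactly two can occur only for $G=K_5^-$: writing $C_v=u_1u_2u_3u_4$ with $u_1,u_3$ the two degree-$3$ neighbors (non-adjacent on $C_v$ by (i)) and $u_2,u_4$ the other two, we get $N(u_1)=N(u_3)=\{v,u_2,u_4\}$, which forces the chord $u_2u_4$ and then (by a short face count) makes every vertex of $\{v,u_1,u_2,u_3,u_4\}$ have all its neighbors inside that set; so $G$ is exactly $K_5$ with the edge $u_1u_3$ deleted. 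For (iv), suppose no two neighbors of $v$ of degree at least $4$ are adjacent in $G$. If every vertex of $C_v$ had degree at least $4$, two consecutive ones would already be such a pair; so some $w\in C_v$ has degree $3$, and $N(w)=\{v,x,y\}$ where $x,y$ are the two neighbors of $w$ on $C_v$ (distinct, since $|C_v|=d(v)\ge 4$). The closed neighborhood of $w$ is a $K_4$, so $xy\in E(G)$; and $x,y$ each have degree at least $4$ by (i), being adjacent to the degree-$3$ vertex $w$. Thus $x,y$ are adjacent neighbors of $v$ of degree at least $4$, a contradiction.

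The one place that needs genuine care is the $K_5^-$ clause of (iii): one must verify that in the bad $d(v)=4$ configuration the five named vertices and their incident triangular faces already tile the sphere — a short degree/Euler count — so that $G$ has no further vertices. The remaining bookkeeping (that the link of a vertex is a single cycle, that the outer face is also a triangle, and that $K_4$ is excluded by $n\ge 5$) is exactly where the hypothesis $n\ge 5$ does its work; everything else is a routine independent-set-in-a-cycle estimate.
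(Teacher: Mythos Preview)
Your proof is correct and follows essentially the same route as the paper: both arguments rest on the fact that in a triangulation the neighborhood of any vertex is a cycle (your ``link'' $C_v$), reduce (ii) to an independent-set bound on that cycle via (i), and handle (iii) and (iv) by looking at the $K_4$ induced around a degree-$3$ vertex. Your treatment of (iv) is in fact slightly cleaner than the paper's---you do not need the full alternation of degree-$3$ and higher-degree vertices around $C_v$, only a single degree-$3$ vertex on it---but otherwise the two proofs are interchangeable.
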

\begin{proof}~
    \begin{enumerate}[$(i)$.]
        \item Fix a plane embedding of $G$ and suppose to the contrary  that two degree $3$ vertices $u,v$ are adjacent. Since $G$ is a maximal planar graph and thus a triangulation, there exist vertices $x$ and $y$ such that $uvx$ and $uvy$ are triangular faces. Since both $u$ and $v$ have degree exactly $3$, these are all the edges incident to $u$ and $v$. Consequently, there must be a face that contains both paths $xuy$ and $xvy$. This implies the existence of parallel edges between $x$ and $y$, contradicting the fact that $G$ is simple.
        \item Since $G$ is a maximal planar graph, the neighborhood of $v$ induces a cycle in the plane embedding. By ($\ref{item:independent}$), the degree $3$ vertices in this neighborhood form an independent set. Therefore, in the cyclic ordering of neighbors around $v$, no two degree $3$ vertices can be consecutive. Hence, at most $\lfloor d(v)/2 \rfloor$ vertices in the neighborhood can have degree $3$.
        \item For vertices of degree at least $5$, this follows immediately from ($\ref{item:neighbors}$). For the case where $d(v) = 4$, suppose to the contrary that $v$ has fewer than three neighbors of degree at least $4$, and thus at least two degree $3$ neighbors. Let $x$ and $y$ be these degree $3$ neighbors, and let $a$ and $b$ be the other two neighbors of $v$. Since both $x$ and $y$ have degree exactly $3$, their closed neighborhoods must each induce a $K_4$ containing the edge $ab$. This forces either the existence of parallel edges between $a$ and $b$ (contradicting simplicity), or that the graph is precisely $K_5^-$ (the excluded case).
        \item Fix a plane embedding of $G$ and consider any vertex $v$ of degree at least $4$. Suppose to the contrary that any two neighbors of $v$ of degree at least $4$ are non-adjacent in $G$. The neighborhood of $v$ induces a cycle in this embedding and the vertices of degree $3$ must alternate with vertices of higher degrees in the cycle. Consider any two vertices $a,b$ of degree at least $4$ that share a degree $3$ neighbor $x$ in this cycle. Since $x$ has degree exactly $3$, $axb$ must form a triangular face in the embedding. This implies that $a$ and $b$ are adjacent, which contradicts our assumption.
    \end{enumerate}
\end{proof}

The following proposition gives an upper bound on the number of degree $3$ vertices in $G$.
\begin{proposition}\label{prop:max3}
    For $n\ge 5$, a maximal planar graph $G$ on $n$ vertices has at most $\frac{2n-4}{3}$ degree $3$ vertices. This bound is sharp.
\end{proposition}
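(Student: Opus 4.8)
The plan is to combine Euler's formula with the independence of degree-$3$ vertices, stated in \cref{prop:main}~($\ref{item:independent}$), through a double-counting argument on incidences between degree-$3$ vertices and faces. Fix a plane embedding of $G$; since $G$ is a maximal planar graph, it is a triangulation, so every face is bounded by a triangle, and by Euler's formula the number of faces is exactly $2n-4$.

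The first step is to observe that a triangular face cannot be incident to two degree-$3$ vertices: any two vertices lying on a common face are adjacent, so two such degree-$3$ vertices would contradict \cref{prop:main}~($\ref{item:independent}$). The second step is the dual observation that every degree-$3$ vertex $v$ is incident to exactly three faces, namely the three triangles forming the link of $v$. Now count the pairs $(v,F)$ where $v$ is a degree-$3$ vertex incident to the face $F$: summing over vertices gives exactly $3\,n_3(G)$, whereas summing over faces gives at most $2n-4$, since each face contributes at most one such pair. Hence $3\,n_3(G)\le 2n-4$, which is the claimed bound. Note that distinct degree-$3$ vertices automatically contribute disjoint triples of faces, because a face incident to $v$ contains $v$ on its boundary.

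For sharpness I would exhibit, for every integer $m\ge 3$, a maximal planar graph on $n=3m-4$ vertices meeting the bound; this covers all $n\equiv 2\pmod 3$ with $n\ge 5$, i.e.\ exactly those $n$ for which $\tfrac{2n-4}{3}$ is an integer (for $n=5$ the example is $K_5^-$). Starting from an arbitrary triangulation $H$ on $m$ vertices, insert into each of its $2m-4$ faces a new vertex joined to the three vertices of that face (the stacked triangulation of $H$). A short bookkeeping check gives that the result $G$ is a triangulation on $m+(2m-4)=3m-4$ vertices; each inserted vertex has degree $3$, so $n_3(G)\ge 2m-4=\tfrac{2n-4}{3}$, and together with the upper bound this is an equality. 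It remains only to verify that the inserted vertices are the \emph{only} degree-$3$ vertices: a vertex $u$ of $H$ has degree $d_H(u)\ge 3$ in $H$ and gains one new neighbor per incident face, so $d_G(u)=2d_H(u)\ge 6$.

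The result is elementary and I do not expect a genuine obstacle; the only points that need a word of care are the "at most one degree-$3$ vertex per face" claim — which is precisely where the independence property \cref{prop:main}~($\ref{item:independent}$) is invoked — and the degree bookkeeping that identifies the degree-$3$ vertices in the extremal construction.
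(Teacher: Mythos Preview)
Your argument is correct and matches the paper's proof essentially line for line: Euler's formula gives $2n-4$ faces, \cref{prop:main}(\ref{item:independent}) forces at most one degree-$3$ vertex per face, and counting face--vertex incidences yields $3\,n_3(G)\le 2n-4$; the sharpness construction by stacking a vertex in every face of an arbitrary triangulation is also the same one the paper uses. Your write-up is in fact slightly more careful (you make the double count explicit and verify that the old vertices of $H$ acquire degree $2d_H(u)\ge 4$), with only the trivial caveat that the inequality $d_H(u)\ge 3$ requires $m\ge 4$, though the conclusion $d_G(u)\ge 4$ still holds for $m=3$.
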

\begin{proof}
By Euler's formula, the number of faces in $G$ is $2n-4$. From \cref{prop:main}(\ref{item:independent}), it follows that each face of $G$ contains at most one degree~$3$ vertex. Since each degree~$3$ vertex is incident to exactly $3$ faces, the total number of degree $3$ vertices is at most $\frac{2n-4}{3}$.

To see that this bound is attained, begin with any maximal planar graph and, for each triangular face, add a new vertex of degree~$3$ adjacent to the three vertices of that face. This construction ensures that each face contains exactly one degree~$3$ vertex, thus achieving the claimed bound.
\end{proof}

\section{The upper bound}

In this section, we prove \cref{thm:main}. We consider two distinct regimes in our proof, which together cover all possible cases.

\begin{lemma}\label{lemma1}
    Let $G$ be a maximal planar graph on $v(G)=n$ vertices with $n_3(G)=k$, and $k\le \lfloor n/2\rfloor -\epsilon n-1$ for some $\epsilon>0$. Then, $G$ has a plane-saturated subgraph $H$ with at most $(3-\epsilon/2)n$ edges.
\end{lemma}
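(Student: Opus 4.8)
The plan is to build a plane-saturated subgraph $H$ of $G$ starting from a carefully chosen spanning structure and then argue the edge count directly. Since $G$ is a triangulation, the naive "stop adding edges once saturated" approach gives a bound near $3n$, so I need to exploit the fact that there are relatively few degree-$3$ vertices to shave off a linear term. The key observation is that deleting a vertex from a plane graph creates a large face (bounded by the former neighborhood cycle), and inside that face one can re-triangulate essentially freely while staying a subgraph of $G$ only along the boundary — which is wasteful — so instead I want to \emph{not} delete vertices but rather start from a sparse connected plane subgraph $H_0$ of $G$ and saturate it, showing the saturation process cannot refill everything.

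First I would fix a plane embedding of $G$. The main idea: choose a large independent set $I$ of \emph{non-degree-$3$} vertices that is moreover "spread out" in the embedding, and let $H_0$ be the plane subgraph obtained by taking all of $G$'s edges except those lying strictly inside the link-cycles of the vertices of $I$ — equivalently, for each $v \in I$, keep $v$ and its incident edges but delete all chords and interior structure so that $v$ sits inside a large face of $H_0 - v$ whose boundary is the link of $v$. Each such operation removes roughly $d(v) - 3$ edges (the interior triangulating edges) relative to the full triangulation, while the saturation rule will only let us add back edges that are present in $G$; the point is that once $H_0$ is saturated to some $H$, each face containing a vertex of $I$ in its "cavity" must remain under-triangulated because re-adding a $G$-edge across that cavity would be possible — so I instead want $H_0$ already saturated, or nearly so, by construction. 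Concretely, I would take $H_0$ to be: a spanning triangulation-minus-something. Let me restate the cleaner route: pick a maximum-size set $S$ of pairwise non-adjacent, pairwise "non-face-sharing" vertices of degree $\ge 4$; by \cref{prop:main} and the bound $n_3(G) = k$ small, a counting/greedy argument gives $|S| \ge (\tfrac12 - \epsilon - o(1))\,\tfrac{n}{\text{const}}$ such vertices, and for each $s \in S$ the link of $s$ is a cycle $C_s$ of length $d(s)\ge 4$ that contains at least one chord in $G$ (by \cref{prop:main}(iv), two neighbors of $s$ of degree $\ge 4$ are adjacent). Let $H$ be $G$ with, for each $s\in S$, one such chord $x_sy_s$ of $C_s$ deleted along with... no: deleting a single edge from a triangulation leaves a subgraph that is \emph{not} saturated, since that very edge can be re-added. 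So the deletion must be irreversible in $G$, which it never is. Hence I must delete a \emph{set} of edges around each $s$ so that the resulting face is a cycle of length $\ge 5$ \emph{no chord of which lies in $G$}; but all chords needed to triangulate it lie in $G$. This shows the subtractive approach is doomed, and the construction must be genuinely additive.

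So the actual plan: start from a spanning tree or a small plane connected subgraph and greedily add edges of $G$ maintaining planarity until saturated, but \emph{choose the order} so that many vertices of $I$ get "locked" at low degree. Specifically, take the independent set $I$ of non-degree-$3$ vertices of size $\ge \lceil n/2\rceil - k \ge \epsilon n + 1$ (exists since the degree-$\ge 4$ vertices number $n-k$ and a triangulation, being $4$-colorable hence its subgraph on those vertices has an independent set of size $\ge (n-k)/4$; more carefully use that a planar bipartite-ish structure... I will instead invoke a large independent set via the $5$-color theorem giving $\ge (n-k)/5$, which already exceeds $\epsilon n/2 \cdot \text{const}$). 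For each $v\in I$ I will ensure $H$ contains exactly $3$ edges at $v$ forming a triangle $vab$ with $ab \in E(G)$ and such that in $H$ the vertex $v$ lies in the bounded region of a triangular face $vab$ — then no edge at $v$ can be added (planarity blocks it, as $v$ is inside a triangle) so $v$ contributes only $3$ to the degree sum. Building such an $H$: take $G' = G - I$ (still connected, as $I$ independent and $G$ $3$-connected with $|I|$ not too large — this needs checking but $G-I$ is connected since removing an independent set from a triangulation with $\ge 4$ vertices keeps connectivity as long as... hmm, this can fail).

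The main obstacle, then, is exactly this: ensuring $G - I$ (or the chosen core) is a plane-saturated subgraph \emph{on its own vertex set extended by $I$}, i.e. that after re-inserting each $v\in I$ inside some triangular face of a saturated triangulation of $G-I$ that is a subgraph of $G$, no further $G$-edge can be added anywhere. The delicate part is two-fold: (a) $G-I$ need not be a triangulation, and triangulating it within $G$ may be impossible, so I must first show a saturated plane subgraph of $G$ "avoiding $I$-interiors" exists with $\le 3(n-|I|) + O(1)$ edges by a separate saturation argument on $G-I$ viewed inside $G$; and (b) placing $v\in I$ requires a triangular face of this subgraph whose three vertices are exactly three mutual neighbors of $v$ in $G$ forming a triangle in $G$ — guaranteed to exist by \cref{prop:main}(iv) applied to $v$ (degree $\ge 4$) — and I must place all of $I$ simultaneously without two of them wanting the same face, which is where independence of $I$ plus the face-count $2(n-|I|)-4 \gg |I|$ saves me. Granting these, the edge count is $e(H) \le 3(n-|I|) + O(1) + 3|I| = 3n + O(1)$, which is \emph{not} good enough — I lose nothing! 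The gain must come from $H$ restricted to $G - I$ being sparse: since the faces where $I$-vertices sit are "used up", and more importantly each $v\in I$ sitting inside a triangle means the $\ge d_G(v)-3 \ge 1$ other $G$-edges at $v$ are absent, but that alone is just $\ge |I|$ edges, giving $e(H) \le 3n - |I| + O(1) \le (3 - \epsilon/2)n$ for $|I| \ge \tfrac{\epsilon}{2} n + O(1)$. So the real target is $|I| \ge \tfrac{\epsilon}{2}n$; I'll obtain this from the hypothesis $k \le \lfloor n/2\rfloor - \epsilon n - 1$ by taking $I$ to be a maximum independent set among degree-$\ge 4$ vertices with the extra face-disjointness property, and the hard estimate will be proving $|I| \ge \tfrac{\epsilon}{2}n$ with that property while also ensuring $G$ minus $I$'s interiors admits a saturated completion inside $G$ with $\le 3(n-|I|)+O(1)$ edges. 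I expect the face-disjointness / simultaneous-placement bookkeeping and the connectivity-of-$G-I$ issue to be the genuine technical heart; everything else is Euler's formula plus \cref{prop:main}.
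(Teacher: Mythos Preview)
Your instinct---trap many vertices inside triangular faces so they have degree at most $3$ in $H$, then compare degrees to $G$---is exactly right, but you are working much harder than necessary. The obstacles you flag (finding an independent set $I$ of degree-$\ge 4$ vertices with face-disjointness, ensuring $G-I$ stays nice, placing each $v\in I$ in a face bounded by its actual $G$-neighbours) are all self-imposed. None of them are needed, and they are precisely what makes your argument hard to close.

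The paper's proof dispenses with all of this by not caring \emph{which} vertices get trapped. Fix a plane embedding of $G$; start from one face and repeatedly adjoin adjacent triangular faces until you have a plane subgraph $H_t$ on $\lceil n/2\rceil+1$ vertices. Since each new vertex contributes at least one new triangular face, $H_t$ has at least $\lceil n/2\rceil-1 \ge \lfloor n/2\rfloor-1$ triangular faces. Now drop the remaining $\lfloor n/2\rfloor-1$ vertices of $G$ into distinct triangular faces of $H_t$, one per face, and saturate to obtain $H$. Every trapped vertex has degree at most $3$ in $H$, simply because it sits inside a triangle. But $G$ has only $k$ vertices of degree $3$, so under any subgraph embedding of $H$ into $G$, at least $\lfloor n/2\rfloor-1-k \ge \epsilon n$ of the trapped vertices map to vertices of degree $\ge 4$ in $G$; each such vertex loses at least one unit of degree, giving $e(H)\le (3n-6)-\tfrac{1}{2}\bigl(\lfloor n/2\rfloor-1-k\bigr) < (3-\epsilon/2)n$.

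Note what this avoids: no independent set, no $4$- or $5$-colour theorem, no requirement that a trapped vertex sit among its own $G$-neighbours, and no analysis of $G-I$. The hypothesis $k\le \lfloor n/2\rfloor-\epsilon n-1$ is used only once, in the final pigeonhole: since you trap $\lfloor n/2\rfloor-1$ vertices and only $k$ of $G$'s vertices have degree $3$, a surplus of $\epsilon n$ trapped vertices are forced to lose degree. Your version tries to pre-select that surplus explicitly, which is where all the technical difficulties come from.
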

\begin{proof}
Let $\phi$ be a fixed plane embedding of $G$. We shall construct $H$ through an iterative process:
\begin{enumerate}
    \item Initialize $H_0$ with an arbitrary face of $\phi(G)$.
    \item For $i \geq 1$, construct $H_i$ from $H_{i-1}$ by adding a triangular face of $\phi(G)$ that shares at least one edge with $H_{i-1}$. This process adds at most one new vertex per step.
    \item Continue this process until exactly $\lceil n/2 \rceil {-}2$ vertices have been added to $H_0$ to form $H_t$.
\end{enumerate}

At this stage, $H_t$ contains exactly $\lceil n/2 \rceil + 1$ vertices. The remaining $\lfloor n/2 \rfloor - 1$ vertices of $G$ are yet to be added. Note that $H_t$ contains at least $\lceil n/2 \rceil-1$ triangular faces, as each vertex after the initial three contributes at least one new triangular face. Now, place each of the remaining vertices inside a distinct triangular face of $H_t$ and add edges to obtain a maximal plane-saturated subgraph $H$. By construction, each of these $\lfloor n/2 \rfloor - 1$ newly added vertices has degree at most $3$ in $H$.

In the original graph $G$, $(n-k)$ vertices have degree at least $4$. However, in $H$, at least $\lfloor n/2 \rfloor - 1$ vertices have degree $3$. Since $k < \lfloor n/2\rfloor-1$, this means that at least $\lfloor n/2 \rfloor - 1 - k$ vertices must have lower degree in $H$ than in $G$. Therefore, $H$ has at most \[3n-6-\frac{1}{2}\left(\left\lfloor \frac{n}{2} \right\rfloor - 1 - k\right)<\left(3-\frac{\epsilon}{2}\right) n\] many edges.
\end{proof}

\begin{lemma}\label{lem:many3}
If $G$ is a maximal planar graph on $n$ vertices with $n_3(G)=k$, and $k\ge (0.4+\epsilon)n$ for some $\epsilon>0$, then $G$ has a plane-saturated subgraph $H$ with at most $(3-\frac{5\epsilon}{3})n$ edges.
\end{lemma}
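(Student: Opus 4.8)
Let $D_3$ be the set of degree‑$3$ vertices of $G$; by \cref{prop:main}(\ref{item:independent}) it is independent, so $G':=G-D_3$ is again a maximal planar graph, on $m:=n-k$ vertices, and $G$ is recovered from $G'$ by inserting each $v\in D_3$ into a distinct face of $G'$ (the triangle on its three neighbours). The guiding idea is that degree‑$3$ vertices are \emph{cheap}: in a re‑embedding of a suitable subgraph we can keep each of them at degree at most $2$. Take $H_0:=G'$, which is already a plane‑saturated subgraph of $G'$ (it is maximal planar), with $e(H_0)=3m-6$, and re‑insert the vertices of $D_3$ as follows. For $v\in D_3$ with $N_G(v)=\{x,y,z\}$: if some face of $H_0$ has its boundary disjoint from $\{x,y,z\}$, drop $v$ into that face with no incident edges; otherwise use that the edge $xy$ of $G'$ lies on exactly two faces of $G'$, one of which is the triangle $xyz=N_G(v)$, hence the other is a triangle $xyw$ with $w\neq z$, and place $v$ inside that face joined only to $x$ and $y$. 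In both cases the possibly missing edges $vx,vy,vz\in E(G)$ are undrawable, since the relevant non‑neighbour(s) of $v$ are absent from every face of $H$ containing $v$; and no edge of $G'$ becomes drawable, because inserting isolated or degree‑$2$ vertices inside faces does not change which vertices of $G'$ are cofacial and $H_0=G'$ was saturated. Thus the resulting $H$ is a plane‑saturated subgraph of $G$ with vertex set $V(G)$.

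\noindent For the edge count, each $v\in D_3$ contributes at most $2$ edges, so
\[
e(H)\ \le\ e(G')+2k\ =\ \bigl(3(n-k)-6\bigr)+2k\ =\ 3n-k-6\ \le\ 3n-(0.4+\epsilon)n-6\ =\ (2.6-\epsilon)n-6,
\]
which is below $(3-\tfrac{5\epsilon}{3})n$ whenever $\tfrac23\epsilon\le 0.4$, and hence for every $\epsilon$ for which the hypothesis $k\ge(0.4+\epsilon)n$ is non‑vacuous (recall $k\le\frac{2n-4}{3}$). Combined with \cref{lemma1}, whose regime $k\le\lfloor n/2\rfloor-\epsilon n-1$ overlaps $k\ge(0.4+\epsilon)n$ once $\epsilon$ is small enough, this covers all maximal planar graphs with a single universal constant, which is \cref{thm:main}.

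\noindent The places I expect to need genuine care are the two bookkeeping points rather than any new structure. First, one must check carefully that the re‑insertions never create a drawable $G$‑edge even when several vertices of $D_3$ fall into the same face of $H_0$: here \cref{prop:main}(\ref{item:independent}) is used twice — it guarantees no two such vertices are adjacent and that no two share a base triangle (so each $N_G(v)$ really is a face of $G'$), and it lets one nest the degree‑$\le 2$ gadgets so that every inserted vertex's two private faces avoid its own non‑neighbours. Second, one must pin down the thresholds so that the regimes of \cref{lemma1} and \cref{lem:many3} exhaust all $k$ while keeping $\epsilon>0$ absolute; this is routine, and is presumably where the round numbers $0.4$ and $\tfrac53$ come from. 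If one wanted the constant $\tfrac{5\epsilon}{3}$ to be matched tightly rather than with the comfortable slack above, the natural refinement is to lower the per‑vertex cost from $2$ to $1$ for the (typical) degree‑$3$ vertices having a neighbour of degree $\ge 5$, and to $0$ for those with a face of $G'$ avoiding all three neighbours — but this is not needed for the stated bound.
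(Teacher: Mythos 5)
There is a genuine gap in the saturation verification. Your argument implicitly tests ``$H+e$ is a subgraph of $G$'' against the fixed identification of $V(H)$ with $V(G)$, but in this problem (see the mapping $f$ in the paper's lower-bound proof, and the degree analysis in the proof of \cref{thm:construction}) it means that $H+e$ is isomorphic to \emph{some} subgraph of $G$, and the isomorphism may be chosen anew for each candidate edge. Under a remapping, many edges become addable to your $H$. Concretely, suppose $v\in D_3$ is placed isolated inside a face of $G'$ whose boundary avoids $N_G(v)$, and let $a$ be a vertex of $G'$ on the boundary of the face of $H$ containing $v$. The edge $va$ is drawable, and $H+va$ is again a plane subgraph of $G$ whenever $a$ has some degree-$3$ neighbor $v'$ of $G$ that is also isolated in $H$: map $v\mapsto v'$, $v'\mapsto v$, and fix everything else. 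The same swap can hand a degree-$2$ vertex $v$ sitting in the face $xyw$ its third edge $vw$ whenever $xyw$ houses a degree-$3$ vertex of $G$ that is free in $H$. In the regime $k\ge(0.4+\epsilon)n$ such configurations are unavoidable, so your $H$ is typically not saturated, and a saturated extension of it may push many vertices of $D_3$ back up to degree $3$, so the count $e(H)\le 3n-k-6$ is not justified. (A smaller inaccuracy: independence of $D_3$ does not imply that no two of its vertices share a base triangle --- $K_5^-$ already has two non-adjacent degree-$3$ vertices with identical neighborhoods --- although each $N_G(v)$ does still bound the face of $G'$ containing $v$.)

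The paper's proof is built precisely to neutralize this remapping freedom. It first augments $G[U]$ by at most one extra edge per vertex (via \cref{prop:main}(\ref{item:highNeighbors})) so that every vertex of $U$ has degree at least $4$; this pins $f(U)=U$ for every subgraph embedding $f$ of every extension, which your $H$ does not guarantee (a vertex of degree $3$ in $G'$ may still have degree $3$ in your $H$). It then places \emph{all} remaining degree-$3$ vertices $W$ inside a single face $T$, where planarity alone forces all but one of them to miss an edge to $T$, and the forced inclusion $f(W)\subseteq D_3$ together with the independence of $D_3$ rules out edges inside $W$. Crucially, the paper then bounds the edge count of an \emph{arbitrary} plane-saturated extension of this configuration rather than exhibiting and verifying one explicit saturated subgraph. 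To salvage your ``one cheap gadget per degree-$3$ vertex'' layout you would need an analogous pinning argument for the vertices of $D_3$ themselves, but degree-$3$ vertices of $G$ are genuinely interchangeable under subgraph embeddings, which is exactly why the single-face trick is used.
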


\begin{proof}
Fix a plane embedding $\phi$ of $G$. Let $U$ denote the set of vertices of degree at least~$4$ in $G$, and let $H_U$ be the plane subgraph of $G$ induced by $U$ under the embedding $\phi(G)$. We claim that $H_U$ is a maximal planar graph. To see why, observe first that removing a vertex of degree $3$ from a maximal planar graph results in another, smaller maximal planar graph. Indeed, the removal of a vertex of degree $3$ and its incident edges keeps all faces triangular, thereby preserving maximal planarity. Next, note that $H_U$ is obtained from $\phi(G)$ by repeatedly removing vertices of degree $3$ in $V(G)\setminus U$. Since at each step the resulting graph remains maximal planar, it follows that the final subgraph $H_U$ is also a maximal planar graph.

We extend $H_U$ to a plane subgraph $\widetilde{H_U}$ of $G$, where each vertex in $U$ has degree at least~$4$, by adding edges incident to $U$. Since these additional edges must be incident to degree $3$ vertices of $G$, this process incorporates some degree $3$ vertices into our construction. By \cref{prop:main}($\ref{item:highNeighbors}$), each vertex in $U$ requires at most one additional edge to achieve degree~$4$ in $\widetilde{H_U}$. Since $H_U$ is maximal planar on $n-k$ vertices, by \cref{prop:max3}, $n_3(H_U)< 2v(H_U)/3 = 2(n-k)/3$. Therefore, we introduced at most $2(n-k)/3$ additional vertices in this process. Let $W$ be the set of remaining degree $3$ vertices that were not used to construct $\widetilde{H_U}$. Since we started with $k$ degree $3$ vertices and used at most $2(n-k)/3$ of them, we still have \[|W| \geq k-\frac{2(n-k)}{3} = \frac{5k-2n}{3}>0\] degree 3 vertices left.

Consider a trianglular face of the graph $\widetilde{H_U}$ and  denote it by $T$. We construct $H_W$ by placing all vertices of $W$ inside $T$. In any plane extension of $H_W$, at most one vertex in $W$ can be adjacent to all three vertices of $T$ without violating planarity. Therefore, each of the remaining vertices in $W$ must miss at least one edge to the vertices of $T$. Since $|W| \ge  {(5k-2n)}/{3}$, any plane extension of $H_W$ must have fewer edges than $G$ at least by $|W|-1\ge {(5k-2n-3)}/{3}$. Therefore,
\[ \sat_\cP(G) \le e(G) - \frac{5k-2n-3}{3} = (3n-6) - \frac{5k-2n-3}{3} < \left (3-\frac53\epsilon\right )n,\] where the last inequality follows from our assumption that $k \ge (0.4+\epsilon)n$.
\end{proof}

Putting the two regimes together, we have:
\thmMain* 
\begin{proof}
    For $v(G)=n\ge 16$, observe that \[
    \left(0.4+\frac{1}{150}\right)n < \lfloor 0.5n\rfloor - \frac{1}{500}n-1
    \]
    Hence, in a maximal planar graph $G$ with at least $16$ vertices, we must either have $n_3(G)<\lfloor 0.5 n\rfloor - \frac{1}{500}n-1$, or $n_3(G)\ge(0.4+\frac{1}{150})n$. Depending on the number of degree $3$ vertices, we can use either \cref{lemma1} or \cref{lem:many3} to deduce that there always exists a plane-saturated subgraph of $G$ with at most $(3-\frac{1}{300})v(G)$ edges, thus the theorem holds with $\epsilon=1/300$.
\end{proof}
\begin{remark}
    The strength of the bound in the proof above depends on the smallest number of vertices it applies to. In particular, for sufficiently large $t$, we have the bound $\sat_\cP(G)<(3-\epsilon)v(G)$ for all maximal planar graphs with more than $t$ vertices, where $\epsilon$ can be set arbitrarily close to $1/26$.
\end{remark}

\section{Lower bound results}

Although the more general result in \cite{clifton2024saturated} implies Corollary \ref{coro:lowerBound}, their proof is highly technical and lengthy. In this section, we first provide a significantly simpler proof for a slightly weaker lower bound on maximal planar graphs.

\begin{proposition}
For any maximal planar graph $G$, $\sat_\cP(G)\ge \frac{v(G)+4}{6}$.
\end{proposition}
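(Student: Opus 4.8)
The plan is to first rephrase plane-saturation in terms of faces. Since every edge of $G$ that is absent from $H$ already respects the ``subgraph of $G$'' condition, the only obstruction to adding such an edge is planarity, and for a plane graph $H$ an edge $uv$ can be drawn without crossings exactly when $u$ and $v$ lie on a common face of $H$ (an isolated vertex, or a whole component floating inside a face, counting as lying on that face). Hence $H$ is plane-saturated if and only if: whenever two vertices of $H$ lie on a common face, they are either adjacent in $H$ or non-adjacent in $G$. Equivalently, for every face $\Phi$ of $H$, writing $S_\Phi$ for the set of vertices of $H$ lying on $\Phi$, the pairs inside $S_\Phi$ are pairwise cofacial, so $G[S_\Phi]\subseteq H$ and in particular $e\big(G[S_\Phi]\big)\le e(H)$.

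With this in hand, if $H$ is connected then, being a spanning subgraph, it has at least $v(G)-1>\frac{v(G)+4}{6}$ edges and we are done; more generally $e(H)\ge v(G)-c(H)$, where $c(H)$ is the number of components, so it remains to treat $H$ with many components, hence with many isolated vertices. The second ingredient is the bound $\alpha(G)\le\frac{2v(G)-4}{3}$ for a maximal planar graph on at least $4$ vertices: each of the $2v(G)-4$ triangular faces contains at most one vertex of a fixed independent set (two such vertices on one triangular face would be adjacent), faces incident to distinct vertices of the set are therefore distinct, and each vertex lies on at least $3$ faces, so $3\alpha(G)\le 2v(G)-4$. Combining the two, $S_\Phi$ contains an independent set of $G$ of size at least $|S_\Phi|-e(H)$ (delete one endpoint from each edge of $G[S_\Phi]$), whence
\[
|S_\Phi|\ \le\ \frac{2v(G)-4}{3}+e(H)\qquad\text{for every face }\Phi\text{ of }H.
\]
Whenever $H$ has a face incident to all of its vertices — which happens automatically when $H$ is a forest, the single face being incident to every vertex — this already yields $v(G)\le\frac{2v(G)-4}{3}+e(H)$, i.e.\ $e(H)\ge\frac{v(G)+4}{3}$, even beating the claimed bound.

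The remaining case is a highly disconnected $H$ that nonetheless contains cycles, so that no single face sees every vertex; this is where the real work lies. Here I would pass to the degree sequence of $H$: since $2e(H)=\sum_v d_H(v)$ and every vertex of $G$ has $d_G(v)\ge 3$, letting $A$ be the set of vertices with $d_H(v)\le 2$ gives $e(H)\ge\frac32\big(v(G)-|A|\big)$, so it suffices to show that at least $\frac{v(G)+4}{9}$ vertices have degree at least $3$ in $H$. Every vertex $v\in A$ has a $G$-neighbour that is not cofacial with it in $H$; together with the observation that a vertex lying on faces $\Phi_1,\dots,\Phi_r$ can be $G$-adjacent only to $N_H(v)$ and to vertices outside $\bigcup_i S_{\Phi_i}$, this sharply restricts how the isolated and low-degree vertices may be packed into the faces of $H$. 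Converting these local restrictions into the required linear lower bound on $v(G)-|A|$ — in effect a charging argument assigning each low-degree vertex to nearby edges and high-degree vertices so that every edge receives bounded charge — is the technical heart and the step I expect to be the main obstacle; it is the simplified counterpart of the lengthier counting of Clifton and Salia.
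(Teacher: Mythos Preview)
Your proposal is not a complete proof, and you say so yourself: the ``remaining case'' of a disconnected $H$ containing cycles is where essentially all plane-saturated subgraphs live, and you only gesture at a charging argument without carrying it out. More importantly, there is a conceptual gap at the start. Your equivalence ``$H$ is plane-saturated $\iff$ cofacial non-adjacent vertices are non-adjacent in $G$'' silently fixes one identification $f:V(H)\to V(G)$. The ``only if'' direction does hold for any fixed $f$, but the ``if'' direction is false: saturation requires that $H+uv$ fail to embed in $G$ under \emph{every} correspondence, not just the one you started with. Your entire argument then rests on the weak one-sided consequence $G[f(S_\Phi)]\subseteq H$, discarding the real strength of saturation; whether this hypothesis alone forces $e(H)\ge\tfrac{v(G)+4}{6}$ is unclear, and the reduction to ``at least $\tfrac{v(G)+4}{9}$ vertices have $d_H\ge3$'' is neither proved nor evidently true.

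The paper's argument is short precisely because it exploits what you froze: the freedom to relabel isolated vertices. If $e(H)<\tfrac{n+4}{6}$ then $H$ has $|I|>\tfrac{2n-4}{3}$ isolated vertices. On the $G$-side, the induced plane subgraph $H_G=\phi(G)[V(G)\setminus f(I)]$ has fewer than $\tfrac{n+4}{3}$ vertices, hence fewer than $\tfrac{2n-4}{3}$ faces, and since $G$ is maximal planar one finds $u,v\in f(I)$ lying in a common face of $H_G$ with $uv\in E(G)$. On the $H$-side, $H$ likewise has at most $\tfrac{2n-4}{3}<|I|$ faces, so two isolated vertices $x,y$ are cofacial in $H$. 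Now redefine the correspondence by sending $x\mapsto u$, $y\mapsto v$ (and permuting the remaining isolated vertices to keep injectivity): the edge $xy$ can be drawn in the plane embedding of $H$, and under the new labeling $H+xy$ is a subgraph of $G$. Two pigeonhole steps and a relabeling replace your unspecified charging argument entirely.
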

\begin{proof}
Let $G$ be a maximal planar graph on $v(G)=n$ vertices and let $H$ be a plane graph, which is a subgraph of $G$. We prove that for any such $H$ with $e(H)<\frac{n+4}{6}$, it is always possible to add a plane edge $e$ to $H$ such that $H+e$ remains a subgraph of $G$. 

Denote by $f: V(H) \to V(G)$ the mapping of the vertices of $H$ into $G$ as a subgraph. Let $I = \{v \in V(H) : d_H(v) = 0\}$ be the set of isolated vertices in $H$. By our assumption on $e(H)$:
$$|I| \geq n - 2e(H) > n - 2\cdot\frac{n+4}{6} = \frac{2n-4}{3}.$$

Fix a planar embedding $\phi$ of $G$. Let $H_G$ be the induced plane subgraph of $\phi(G)$ on vertex set $\phi(V(G) \setminus f(I))$. Observe that $H - I$ is a plane subgraph of $H_G$. 
Since $v(H_G)=n-|I| <\frac{n+4}{3}$, the number of faces in $H_G$ is less than $2\cdot\frac{n+4}{3} - 4 = \frac{2n-4}{3}$. By the pigeonhole principle, there exist embedded vertices $u,v \in \phi(f(I))$ that lie in the same face $F$ of $H_G$. Since $G$ is maximal planar, we can further require $uv \in \phi(E(G))$.

Similarly, since $H$ has at most $\frac{2n-4}{3}$ faces, there exist vertices $x,y \in I$ that lie in the same face of $H$. Define a modified mapping $\tilde{f}$ by setting $\tilde{f}(x) = u$ and $\tilde{f}(y) = v$ and $\tilde{f}=f$ otherwise. Now $\phi(\tilde{f})$ shows that $H + xy$ is a plane subgraph of $\phi(G)$ or $G$.
\end{proof}

To prove \cref{thm:construction}, we describe a construction of an infinite family of graphs. The construction is inspired by Construction 1 in \cite{clifton2024saturated}.

\thmConstruction*
\begin{proof}
We construct an infinite family of maximal planar graphs $\{G_k\}_{k\geq 6}$ with corresponding plane-saturated subgraphs $\{H_k\}_{k\ge 6}$ that satisfy the desired property.

For each integer $k\ge 6$, we first construct $G_k'$ as follows. Begin with $C_k$, a cycle on $k$ vertices. Add two vertices $v_1$ and $v_2$, and connect each of them to all vertices of $C_k$, creating $2k$ new edges. At this stage, the graph has $k+2$ vertices and $3k$ edges, forming $2k$ triangular faces. For each triangular face $f$, we add a new vertex and connect it to all three vertices of $f$ to get $G_k'$. This construction ensures that $G_k'$ is maximal planar.

Consider a plane embedding of the disjoint union of $G_k'$ and a triangle $x,y,z$. In the plane embedding of $G_k'$, one vertex $u$ of the face that contains $xyz$ has degree $3$, which is obtained from the final step when constructing $G_k'$. We set the neighbors of  $u$ to be $v_2, a, b$.
Connect $u$ to all three vertices of $x,y,z$, and add the additional edges $xa,za,zb$ to make the resulting graph maximal planar. Denote this graph by $G_k$. Notice that $v_2$ is adjacent to $u$, but not adjacent to any of $x,y,z$. See the left-hand side of Figure~\ref{fig:tri}. The number of vertices in $G_k$ is given by \[
v(G_k)=v(G_k')+v(K_3)=k+2+2k+3=3k+5.
\]

Next, we define a plane subgraph $H_k$ of $G_k$. Start with a plane embedding of $K_4$, which creates three bounded triangular faces $F_1,F_2,F_3$. In $F_1$, we embed a cycle $C_k$. In $F_2$, we embed two copies of $K_{1,11}$ (stars with a central vertex of degree $11$) and connect one center, $v_2'$ say, to one vertex of $F_2$. In $F_3$, we place $2k-23$ isolated vertices. See the right-hand side of Figure~\ref{fig:tri}.

    \begin{figure}[H]
    \centering
    \includegraphics[width=0.95\linewidth]{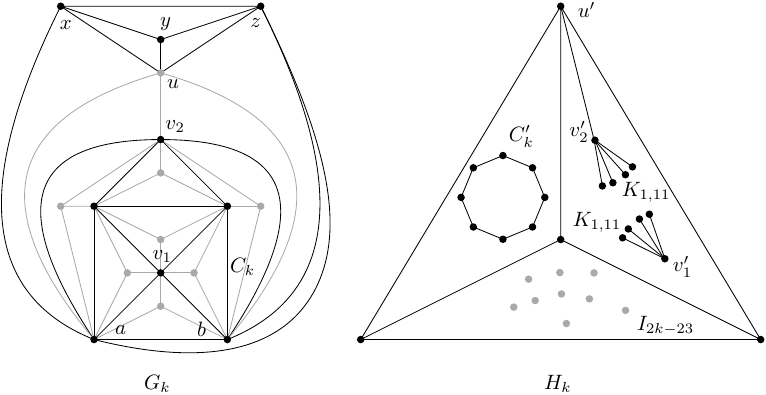}
    \caption{$G_k$ and the saturated plane drawing of its subgraph $H_k$.}
    \label{fig:tri}
    \end{figure}

We shall demonstrate that it is impossible to add an edge to $H_k$ without destroying planarity such that the resulting plane graph is still a subgraph of $G_k$.

Observe that in $G_k$, all vertices have degree at most $10$, except for $v_1$ and $v_2$. As $H_k$ contains exactly two vertices of degree at least $11$ (the centers of the $K_{1,11}$'s), these must correspond to $v_1$ and $v_2$ in $G_k$.
In particular $v_2'$ must correspond to $v_2$. Once $v_1$ and $v_2$ are identified, the cycle $C_k$ in $H_k$ must correspond to the original $C_k$ in $G_k'$ (up to rotation and reflection), as it is the only remaining cycle of length $k$ in the graph. Now the remaining vertices in both $H_k$ and $G_k$ form a $K_4$. They must correspond to each other. In particular, the $K_4$ in $G_k$ is induced by $u,x,y,z$. 
The remaining isolated vertices in $H_k$ must correspond to the remaining vertices in $G_k$.
These are vertices added in bounded triangular faces of $G_k'$, so they form an independent set.
This unique correspondence, together with the prescribed planar embedding of $H_k$, ensures that any additional edge from $G_k$ would create a crossing when added to $H_k$.
Observe that any missing edge of $G_k$ would connect vertices of $H_k$ in different faces. Therefore, $H_k$ is indeed a plane-saturated subgraph of $G_k$. By construction, \[
e(H_k) = e(K_4) + e(C_k) + 2e(K_{1,11})+1 = 6 + k + 22 + 1 = k + 29,
\]
from which we deduce
\[
\sat_\cP(G_k) \leq k + 29 = \frac{v(G_k) - 5}{3} + 29 = \frac{1}{3}v(G_k) + \frac{82}{3},
\]
completing the proof.
\end{proof}

\section*{Acknowledgement}
The first author is partially supported by ERC Advanced Grant ``GeoScape" No.88271. and NKFIH Grant K.131529. The second author was supported by the ÚNKP-23-4-SZTE-628 New National Excellence Program during the first stages of this research work, and currently is supported by the EKÖP-24-4-SZTE-609 Program. Both of these programs belong to the Ministry for Culture and Innovation from the source of the National Research, Development and Innovation Fund. The third author is supported by the National Research, Development and Innovation Office -- NKFIH under the grant K 132696 and by the ERC Advanced Grant ``ERMiD''. This research has been implemented with the support provided by the Ministry of Innovation and Technology of Hungary from the National Research, Development and Innovation Fund, financed under the  ELTE TKP 2021-NKTA-62 funding scheme. The fourth author is supported in part by NSF grant DMS-2154063.

This research was initiated during the 15th Emléktábla Workshop, 2024. The authors are thankful for the organizers for inviting them. The authors also thank Dániel T. Nagy for fruitful early discussions on the project.

\bibliographystyle{plain}
\bibliography{citation}

\end{document}